\DeclareRobustCommand\bigop[1]{%
  \mathop{\vphantom{\sum}\mathpalette\bigop@{#1}}\slimits@
}
\newcommand{\bigop@}[2]{%
  \vcenter{%
    \sbox\z@{$#1\sum$}%
    \hbox{\resizebox{\ifx#1\displaystyle.9\fi\dimexpr\ht\z@+\dp\z@}{!}{$\m@th#2$}}%
  }%
}
\theoremstyle{definition}
\newtheorem{theorem}{Theorem}
\newtheorem{proposition}[theorem]{Proposition}
\newtheorem{definition}[theorem]{Definition}
\newtheorem{claim}[theorem]{Claim}
\newtheorem{example}[theorem]{Example}
\begin{document}

\title{Transcendence of Hecke-Mahler Series}

\author{
  {\sc Florian~Luca}\\
  {Mathematics Division, Stellenbosch University}\\
{Stellenbosch, South Africa}\\
{fluca@sun.ac.za}
\and
{\sc Jo\"el Ouaknine}\\
{Max Planck Institute for Software Systems}\\
{Saarland Informatics Campus, Saarbr\"ucken, Germany  }\\
{joel@mpi-sws.org}
\and
{\sc James Worrell}\\
{Department of Computer Science}\\
{University of Oxford, Oxford OX1 3QD, UK}\\
{jbw@cs.ox.ac.uk}}

\date{}
\maketitle

\begin{abstract}
  We prove transcendence of the Hecke-Mahler series
  $\sum_{n=0}^\infty f(\lfloor n\theta+\alpha \rfloor) \beta^{-n}$,
  where $f(x) \in \mathbb{Z}[x]$ is a non-constant polynomial,
  $\alpha$ is a real number, $\theta$ is
  an irrational real number, and $\beta$ is an algebraic number such
  that $|\beta|>1$.
  \end{abstract}

\section{Introduction}

In this paper we prove transcendence of the 
Hecke-Mahler series
\begin{gather}\sum_{n=0}^\infty f(\lfloor n\theta+\alpha \rfloor) 
  \beta^{-n} \, ,
\label{eq:HM}
\end{gather}
where $f(x) \in \mathbb{Z}[x]$ is a non-constant polynomial, $\alpha$
is a real number, $\theta$ is an irrational real number, and $\beta$
is an algebraic number such that $|\beta|>1$.  Transcendence
of~\eqref{eq:HM} in the case $\alpha=0$ was proved by Loxton and Van
der Poorten~\cite[Theorem 7]{LP}.  Transcendence with $\alpha$
unrestricted and  $f(x)=x$  was recently shown by Bugeaud and
Laurent~\cite{BL} and by the present authors~\cite{LOW}.
The papers~\cite{BL,LP} use the Mahler method to prove transcendence,
while~\cite{LOW} relies on the  $p$-adic Subspace  Theorem.
Linear independence over $\overline{\mathbb Q}$ of
expressions of the form~\eqref{eq:HM}, again with $f(x)=x$, is
studied in~\cite{BKLN21,LOW}.  Meanwhile, Masser~\cite{Masser1999}
proves algebraic independence results for Hecke-Mahler series of the
form $\sum_{n=0}^\infty \lfloor n\theta \rfloor \beta^{-n}$ in case
$\theta$ is a quadratic irrational number.

To prove the transcendence of~\eqref{eq:HM} for arbitrary $f$, we introduce a new
combinatorial condition on a sequence of numbers
$\boldsymbol u=\langle u_n\rangle_{n=0}^\infty$ that, via the $p$-adic
Subspace
Theorem, entails transcendence of the sum
$\sum_{n=0}^\infty u_n \beta^{-n}$ for algebraic $\beta$ with
$|\beta|>1$.  This condition is a development of those presented
in~\cite{KebisLOS024,LOW} (which are in turn inspired by~\cite{AB1,FM}),
but with significant new elements.
Intuitively, the condition can be considered as a \emph{linear recurrence
measure} for $\boldsymbol u$, measuring the extent to which
$\boldsymbol u$
satisfies a linear recurrence of a  prescribed form.
\section{Preliminaries}
\label{sec:prelim}

Let $K$ be a number field of degree $d$ over $\mathbb Q$ and let $M(K)$ be the set of
\emph{places} of $K$.  We divide $M(K)$ into the collection of
\emph{Archimedean places}, which are determined either by an embedding of
$K$ in $\mathbb{R}$ or a complex-conjugate pair of embeddings of $K$
in $\mathbb{C}$, and the set of \emph{non-Archimedean places}, which are
determined by prime ideals in the ring $\mathcal{O}_K$ of integers of $K$.

For $a \in K$ and $v \in M(K)$, define the absolute value $|a|_v$ as
follows: $|a|_v := |\sigma(a)|^{1/d}$ if $v$ corresponds to a real
embedding $\sigma:K\rightarrow \mathbb{R}$;
$|a|_v := |\sigma(a)|^{2/d}$ if $v$ corresponds to a complex-conjugate
pair of embeddings
$\sigma,\overline{\sigma}:K \rightarrow \mathbb{C}$; and
$|a|_v := N(\mathfrak{p})^{-\mathrm{ord}_{\mathfrak{p}}(a)/d}$ if $v$
corresponds to a prime ideal $\mathfrak{p}$ in $\mathcal{O}$ and
$\mathrm{ord}_{\mathfrak{p}}(a)$ is the order to which $\mathfrak{p}$
divides the ideal $a\mathcal{O}$.  With the above definitions we have
the \emph{product formula}: $\prod_{v \in M(K)} |a|_v = 1$ for all
$a \in K^\times$.  Given a set of places $S\subseteq M(K)$, the ring
$\mathcal{O}_S$ of \emph{$S$-integers} is the subring comprising all
$a \in K$ such $|a|_v \leq 1$ for all non-Archimedean places
$v\not\in S$.

For $m\geq 1$ the \emph{absolute Weil height} of the projective point
$\boldsymbol{a}=[a_0 : a_1 : \cdots : a_m] \in \mathbb{P}^m(K)$ is 
\[ H(\boldsymbol{a}):=\prod_{v \in M(K)}\max(|a_0|_v,\ldots,|a_m|_v)
  \, .\] This definition is independent of the choice of the field $K$
containing $a_0,\ldots,a_m$.  We define the height $H(a)$ of $a \in K$
to be the height $H([1 : a])$ of the corresponding point in
$\mathbb{P}^1(K)$.  For a non-zero Laurent polynomial
$f = x^n \sum_{i=0}^m a_i x^i \in K[x,x^{-1}]$, where $m\geq 1$ and
$n\in \mathbb Z$, following~\cite{LEN97} we define its height $H(f)$
to be the height $H([a_0 : \cdots : a_m])$ of the vector of
coefficients.

The following special case\footnote{We formulate the special
  case of the Subspace Theorem in which all
  but one of the linear forms are coordinate variables.} of the $p$-adic
Subspace Theorem of Schlickewei~\cite{Schlickewei76} is one of the
main ingredients of our approach.
\begin{theorem}
  Let $S \subseteq M(K)$ be a finite set of places of $K$ that
  contains all Archimedean places.  Let $v_0 \in S$ be a distinguished
  place and choose a continuation of $|\cdot |_{v_0}$ to
  $\overline{\mathbb Q}$, also denoted $|\cdot |_{v_0}$.  Given
  $m\geq 2$, let $L(x_1,\ldots,x_{m})$ be a linear form with algebraic
  coefficients and let $i_0 \in \{1,\ldots,m\}$ be a distinguished
  index such that $x_{i_0}$ has non-zero coefficient in $L$.  Then for
  any $\varepsilon>0$ the set of solutions
  $\boldsymbol{a}=(a_1,\ldots,a_m) \in (\mathcal{O}_S)^m$ of the
  inequality
  \[ |L(\boldsymbol{a})|_{v_0}\cdot \Bigg( \prod_{\substack{(i,v)\in
        \{1,\ldots,m\}\times S\\(i,v) \neq (i_0,v_0)}}|a_i|_v \Bigg)
    \leq H(\boldsymbol{a})^{-\varepsilon} \] is contained in a finite union
  of proper linear subspaces of $K^m$.
\label{thm:SUBSPACE}
\end{theorem}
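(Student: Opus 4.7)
The plan is to deduce this statement directly from the general $p$-adic Subspace Theorem of Schlickewei, exploiting (as the footnote signals) the fact that all but one of the linear forms appearing here are to be taken as coordinate variables. Recall that the general theorem asserts: for each $v \in S$, given $m$ linearly independent linear forms $L_{1,v},\ldots,L_{m,v}$ in $m$ variables with algebraic coefficients, the $S$-integral points $\boldsymbol a \in \mathcal O_S^m$ satisfying $\prod_{v \in S}\prod_{i=1}^m |L_{i,v}(\boldsymbol a)|_v \leq H(\boldsymbol a)^{-\varepsilon}$ lie in a finite union of proper linear subspaces of $K^m$. The task is therefore to choose an appropriate specialisation of the forms $L_{i,v}$ and to verify that its hypotheses are met.

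First I would specialise the system as follows. For every place $v \in S\setminus\{v_0\}$ and every $i \in \{1,\ldots,m\}$, set $L_{i,v} := x_i$, so that the system at $v$ is just the coordinate system and is trivially linearly independent. For $v = v_0$, set $L_{i,v_0} := x_i$ for $i \ne i_0$ and $L_{i_0,v_0} := L$; here linear independence of the resulting system is equivalent, up to sign of the coefficient matrix's determinant, to the coefficient of $x_{i_0}$ in $L$ being non-zero, which is precisely the hypothesis of the theorem. Hence Schlickewei's general theorem applies to this specialisation.

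Next I would read off the resulting product. Each $v \ne v_0$ contributes $\prod_{i=1}^m |a_i|_v$, while $v_0$ contributes $|L(\boldsymbol a)|_{v_0} \cdot \prod_{i \ne i_0} |a_i|_{v_0}$. Multiplying over $v \in S$, every pair $(i,v) \in \{1,\ldots,m\}\times S$ contributes a factor $|a_i|_v$ except for the single pair $(i_0,v_0)$, where $|a_{i_0}|_{v_0}$ has been swapped out for $|L(\boldsymbol a)|_{v_0}$. This is exactly the left-hand side of the inequality in the statement, and the general theorem's conclusion yields ours verbatim.

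The only anticipated obstacle is notational rather than mathematical: one must track carefully the single distinguished pair $(i_0,v_0)$ throughout the indexing, and check that the paper's particular normalisation of $|\cdot|_v$ (given earlier, via $d$-th roots of the usual absolute values so that the product formula holds) is compatible with the normalisation underlying the standard statement of Schlickewei's theorem, any multiplicative discrepancy in the height being harmlessly absorbed by $\varepsilon$. Once this bookkeeping is carried out, no further argument is needed.
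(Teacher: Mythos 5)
Your derivation is correct and is exactly the route the paper intends: the paper states this result as a special case of Schlickewei's $p$-adic Subspace Theorem (see its footnote, "all but one of the linear forms are coordinate variables"), and your specialisation --- coordinate forms at every pair $(i,v)$ except $(i_0,v_0)$, where $L$ is inserted, with linear independence at $v_0$ coming from the non-vanishing coefficient of $x_{i_0}$ --- is precisely the bookkeeping that citation leaves implicit. The paper offers no further argument, so your proposal matches it, merely filling in the routine details.
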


We will also need the following additional proposition about roots of
univariate polynomials.
\begin{proposition}{\cite[Proposition 2.3]{LEN97}}
  Let $f \in K[x,x^{-1}]$ be a Laurent polynomial with at most 
  $k+1$ terms.  Assume that $f$ can be written as the sum of two 
  polynomials $g$ and $h$, where every monomial of $g$ has degree at 
  most $d_0$ and every monomial of $h$ has degree at least $d_1$. 
  Let $\beta$ be a root of $f$ that is not a root of unity.  If 
  $d_1-d_0> \frac{\log (k \, H(f))}{\log H(\beta) }$ then $\beta$ is a 
  common root of $g$ and $h$. 
\label{prop:gap}
  \end{proposition}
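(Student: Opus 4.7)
The plan is a classical gap argument based on the product formula: if $\beta$ is a root of $f = g + h$ that is \emph{not} a common root of $g$ and $h$, then $\gamma := g(\beta) = -h(\beta)$ is a non-zero algebraic number admitting two representations of very different ``sizes'' at each place of a number field, and this tension contradicts the product formula $\prod_v |\gamma|_v = 1$. So I would assume for contradiction that $\gamma \neq 0$ (the cases $g \equiv 0$ and $h \equiv 0$ make the conclusion trivial) and work in a number field $K$ that contains $\beta$ together with all coefficients of $f$.

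The heart of the argument is a place-by-place estimate of $|\gamma|_v$ that chooses whichever expansion is smaller at $v$. At places with $|\beta|_v \geq 1$, the representation $\gamma = g(\beta)$ gives, by the triangle inequality and the fact that every monomial of $g$ has degree $\leq d_0$, a bound $|\gamma|_v \leq c_v H(f)_v |\beta|_v^{d_0}$, where $H(f)_v := \max_i |a_i|_v$ is the local height factor and $c_v$ is the normalised triangle-inequality slack (essentially the number of terms of $g$ at archimedean $v$, and $1$ at non-archimedean $v$). At places with $|\beta|_v < 1$, the representation $\gamma = -h(\beta)$ gives symmetrically $|\gamma|_v \leq c_v H(f)_v |\beta|_v^{d_1}$. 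Taking the product over all $v \in M(K)$ and combining the product formula $\prod_v |\gamma|_v = 1$ with $\prod_v |\beta|_v = 1$ (both non-zero), the $|\beta|_v$-factors collapse into $\prod_{v : |\beta|_v < 1} |\beta|_v^{d_1 - d_0} = H(\beta)^{-(d_1 - d_0)}$, yielding
\[
1 \leq \Bigl(\prod_v c_v\Bigr) H(f)\, H(\beta)^{-(d_1 - d_0)}.
\]

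To squeeze out the exact constant $k$ (rather than the crude $k+1$), I would observe that $g$ and $h$ have disjoint supports, since $d_0 < d_1$, so together they exhaust at most $k+1$ terms of $f$; assuming both are non-zero, each has at most $k$ terms, so $c_v \leq k^{n_v/d}$ for each archimedean $v$, and the archimedean product identity $\prod_{v \mid \infty} n^{n_v/d} = n$ then gives $\prod_v c_v \leq k$. Hence $H(\beta)^{d_1 - d_0} \leq k H(f)$. Since $\beta$ is not a root of unity, Kronecker's theorem guarantees $\log H(\beta) > 0$, and taking logarithms contradicts the hypothesis $d_1 - d_0 > \log(k H(f))/\log H(\beta)$, so in fact $\gamma = 0$, i.e.\ $g(\beta) = h(\beta) = 0$. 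The main obstacle I anticipate is bookkeeping rather than conceptual: one must normalise the absolute values carefully, split the archimedean places according to the sign of $\log |\beta|_v$, and exploit the disjointness of the supports of $g$ and $h$ to replace the naive $k+1$ by the sharp $k$.
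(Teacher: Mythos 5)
This proposition is not proved in the paper at all --- it is imported verbatim, with citation, from Lenstra \cite[Proposition 2.3]{LEN97} --- so there is no internal proof to compare against; your proposal can only be judged on its own merits and against the cited source. On those terms it is correct and complete: choosing at each place $v$ the representation $\gamma=g(\beta)$ or $\gamma=-h(\beta)$ according to whether $|\beta|_v\geq 1$ or $|\beta|_v<1$, collapsing the $\beta$-factors via the product formula to $H(\beta)^{-(d_1-d_0)}$, bounding the coefficient factors by $H(f)$ (legitimate, since disjointness of supports makes every coefficient of $g$ and $h$ a coefficient of $f$), and extracting the sharp constant $k$ from the fact that both $g$ and $h$ are nonzero under the contradiction hypothesis, is precisely the classical product-formula gap argument that underlies Lenstra's result, with Kronecker's theorem supplying $\log H(\beta)>0$. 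Nothing further is needed.
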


  \section{A Transcendence Condition}
\label{sec:stuttering}
In this section we introduce a condition on
a sequence~$\boldsymbol u$ that implies that the number
$\sum_{m=0}^\infty u_m\beta^{-m}$ is transcendental.  Intuitively the
condition says that $\boldsymbol u$ almost satisfies a linear
recurrence.
\begin{example}
  Let the sequence $\boldsymbol{u}=\langle u_m\rangle_{m=0}^\infty$ be
  given by $u_m:=\left\lfloor \frac{m+1}{2+\sqrt{2}} \right\rfloor$.
  Consider also the sequence $\langle r_n \rangle_{n=0}^\infty =
  \langle 1, 3, 7, 17, 41, 99, 239, \ldots\rangle$ of numerators of the
  convergents of the continued fraction expansion of $\sqrt{2}$.  For
  all $n\in \mathbb{N}$ define   the
  sequence $\boldsymbol w_n=\langle w_{n,m}\rangle_{m=0}^\infty$ by
  $w_{n,m}:= u_{m+2r_n}-2u_{m+r_n}-u_m$. Then  $\boldsymbol w_n$
  becomes increasing sparse for successively larger $n$.  For example,
for $n=2$ it holds that $w_{n,m}$ is zero for 
$m \in \{0,\ldots,70\} \setminus \{9,16,26,33,50,57,67\}$,
for $n=3$ we have that $w_{n,m}$ is zero for 
  $m \in \{0,\ldots,70\}\setminus \{23,40,64\}$, while for $n=4$ we
  have that $w_{n,m}$ is zero for $m \in \{0,\ldots,70\} \setminus \{57\}$.
  \end{example}

  The following definition aims to capture the above behaviour.  Here,
  given $f,g: S \rightarrow \mathbb{R}_{\geq 0}$, we write $f \ll g$
  if there exists a constant $c>0 $ such that
  $f(a) \leq c g(a )$ for all $a \in S$.  We also write $f\lll g$ if for all $c>0$ the inequality $f(a) \leq cg(a)$ holds for
  all but finitely many $a \in S$.

      \begin{definition}
        An integer sequence
        $\boldsymbol{u}=\langle u_m\rangle_{m=0}^\infty$ satisfies
        Condition (*) if there exist $\sigma \geq 0$,
        $b_0,\ldots,b_\sigma \in\mathbb Z$, and an unbounded integer
        sequence $\langle r_n \rangle_{n=0}^\infty$ such that,
defining for each $n\in \mathbb N$
        the sequence
        $\langle w_{n,m}\rangle_{m=0}^{\infty}$ by
        $w_{n,m} := \sum_{i=0}^\sigma b_i u_{m+ir_n}$, the following
        two properties are satisfied:
        \begin{enumerate}
        \item \emph{Expanding Gaps}:
          for all $n \in \mathbb N$, the set $\Delta_n:=\{ m : w_{n,m}\neq 0\}$ is 
        infinite and the minimum distance $\mu_n$ between any two
        elements of $\Delta_n$ satisfies $\mu_n \gg r_n$.
      \item \emph{Polynomial Variation}:
there exists $c_0\geq 0$ such that for all $n\in\mathbb N$ and 
        all   $m'>m$ in $\Delta_n$,  $|w_{n,m'}| \ll
         (m'-m)^{c_0}+|w_{n,m}|$. 
        \end{enumerate}
  \label{def:stuttering}
\end{definition}
\begin{theorem}
  Let the integer sequence $\boldsymbol{u}=\langle
  u_m\rangle_{m=0}^\infty$ satisfy Condition (*) and be such that $|u_m|\ll m^{c_1}$ for some
  $c_1\geq 0$. 
  Then for any algebraic number $\beta$ such that
  $|\beta|>1$, the sum $\alpha:=\sum_{m=0}^\infty \frac{u_m}{\beta^m}$
  is transcendental.
\label{thm:main}
\end{theorem}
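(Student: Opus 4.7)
Suppose for contradiction that $\alpha$ is algebraic, and work in the number field $K:=\mathbb{Q}(\alpha,\beta)$ with a distinguished Archimedean place $v_0$ for which $|\beta|_{v_0}>1$. For each $n$, set $\gamma_n := \sum_{i=0}^{\sigma} b_i\beta^{ir_n}$ and expand the formal product
\[
\gamma_n\alpha \;=\; \sum_{i=0}^{\sigma} b_i\beta^{ir_n}\sum_{m=0}^{\infty} u_m\beta^{-m}
\]
as a Laurent series in $\beta^{-1}$. Its polynomial part is some $Q_n(\beta) \in \mathbb{Z}[\beta]$ of $\beta$-degree at most $\sigma r_n$ whose coefficients are integer linear combinations of $u_0,\ldots,u_{\sigma r_n-1}$ and are therefore polynomially bounded in $r_n$, while the coefficient of $\beta^{-k}$ for $k\geq 0$ is precisely $w_{n,k}$. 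This yields the key identity
\[
L_n \;:=\; \gamma_n\alpha - Q_n(\beta) \;=\; \sum_{k\in\Delta_n} w_{n,k}\,\beta^{-k}.
\]
Writing $\delta_n := \min\Delta_n$, the Expanding Gaps ($\mu_n\gg r_n$), Polynomial Variation, and the growth bound $|u_m|\ll m^{c_1}$ jointly imply that at $v_0$ the series is dominated by its leading term, yielding $|L_n|_{v_0}\leq(\delta_n+r_n)^{C}|\beta|_{v_0}^{-\delta_n}$ for some $C=C(c_0,c_1,\sigma)$.

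I then apply the $p$-adic Subspace Theorem. Let $S$ be a finite set of places of $K$ containing $v_0$, every Archimedean place, and every non-Archimedean place at which $\alpha$ or $\beta$ fails to be integral. Take as point the tuple $\boldsymbol{a}_n := (1,\beta^{r_n},\ldots,\beta^{\sigma r_n},Q_n(\beta))\in (\mathcal{O}_S)^{\sigma+2}$ and as linear form $L(\boldsymbol{x}) := \alpha\sum_{i=0}^{\sigma}b_i\,x_{i+1} - x_{\sigma+2}$, with distinguished index $i_0 = \sigma+2$, so that $L(\boldsymbol{a}_n) = L_n$. The height satisfies $H(\boldsymbol{a}_n)\leq H(\beta)^{\sigma r_n}r_n^{O(1)}$; careful bookkeeping of the absolute values at the places in $S$ -- using the product formula applied to $Q_n(\beta)$ and the inclusion $\beta\in\mathcal{O}_S$ -- shows that, along any infinite subsequence on which $\delta_n$ grows at least linearly in $r_n$, the Subspace inequality
\[
|L_n|_{v_0}\cdot\prod_{(i,v)\neq(i_0,v_0)}|a_i|_v \;\leq\; H(\boldsymbol{a}_n)^{-\varepsilon}
\]
is satisfied for any sufficiently small $\varepsilon>0$. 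Theorem~\ref{thm:SUBSPACE} then forces the $\boldsymbol{a}_n$ to lie in one of finitely many proper subspaces of $K^{\sigma+2}$; by pigeonhole I pass to an infinite further subsequence on which a fixed non-trivial linear relation
\[
c_{\sigma+1}Q_n(\beta) + \sum_{i=0}^{\sigma}c_i\beta^{ir_n} \;=\; 0 \qquad (c_j \in K \text{ not all zero})
\]
holds.

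A short case analysis closes the argument. If $c_{\sigma+1}=0$, the non-zero polynomial $\sum c_i x^i$ would have the infinitely many distinct roots $\beta^{r_n}$, impossible since $|\beta^{r_n}|\to\infty$. If $c_{\sigma+1}\neq 0$, substituting $Q_n(\beta) = -c_{\sigma+1}^{-1}\sum c_i\beta^{ir_n}$ into $L_n = \gamma_n\alpha - Q_n(\beta)$ yields $L_n = \sum_{i=0}^{\sigma}d_i\beta^{ir_n}$ with fixed $d_i = \alpha b_i + c_i/c_{\sigma+1}$; comparing $|L_n|_{v_0}\to 0$ against the exponential growth of $|\beta^{ir_n}|_{v_0}$ for $i\geq 1$ forces every $d_i=0$, whence $L_n=0$ for all $n$ in the subsequence, i.e., $\sum_{k\in\Delta_n}w_{n,k}\beta^{-k}=0$. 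Isolating the leading term and bounding the tail via Expanding Gaps by $O(r_n^{O(1)}|\beta|_{v_0}^{-\delta_n-\mu_n})$ gives $|w_{n,\delta_n}|_{v_0}\ll r_n^{O(1)}|\beta|_{v_0}^{-\mu_n}$, which forces $|w_{n,\delta_n}|<1$ for large $n$ (since $\mu_n\gg r_n\to\infty$), contradicting the integrality $|w_{n,\delta_n}|\geq 1$.

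The principal obstacle is the calibration of the Subspace inequality in the absence of a priori growth of $\delta_n$: when $\delta_n$ stays bounded along an infinite subsequence, $|L_n|_{v_0}$ does not decay and the Subspace Theorem cannot be applied to the tuple above. One should remedy this by replacing $\boldsymbol{a}_n$ with an adjusted tuple that incorporates the factor $\beta^{\delta_n}$, so that the rescaled quantity $\beta^{\delta_n}L_n = \sum_{k>\delta_n,\,k\in\Delta_n}w_{n,k}\beta^{\delta_n-k}$, which is of size $|\beta|_{v_0}^{-\mu_n}$ and therefore does decay, plays the role of the small linear form; the heights grow by a factor $H(\beta)^{\delta_n}$, but this can be absorbed by a finer choice of $\varepsilon$, and Proposition~\ref{prop:gap} is available to control the sparse polynomial identities that arise when extracting the final contradiction from the resulting subspace relation.
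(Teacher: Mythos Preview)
Your overall architecture---form the linear combination $\gamma_n\alpha$, split off the polynomial part $Q_n(\beta)$, recognise the remainder as $\sum_{k\in\Delta_n}w_{n,k}\beta^{-k}$, and feed the resulting tuple into the Subspace Theorem---matches the paper's approach. The gap you yourself flag is genuine and is precisely the crux: nothing in Condition~(*) prevents $\delta_n=\min\Delta_n$ from being bounded (even identically zero), so $|L_n|_{v_0}$ need not decay at all and the Subspace inequality simply fails for your tuple $(1,\beta^{r_n},\ldots,\beta^{\sigma r_n},Q_n(\beta))$.

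Your proposed remedy does not work as written. The displayed identity $\beta^{\delta_n}L_n=\sum_{k>\delta_n,\,k\in\Delta_n}w_{n,k}\beta^{\delta_n-k}$ is false: multiplying by $\beta^{\delta_n}$ leaves the constant term $w_{n,\delta_n}$, a nonzero integer, so the rescaled quantity still does not tend to zero. What you actually need is to \emph{subtract} the leading term, i.e.\ introduce $w_{n,\delta_n}\beta^{-\delta_n}$ as an additional coordinate of the tuple. But even that is not enough in general: after removing one term the next element of $\Delta_n$ may still lie below the threshold $\rho r_n$ (with $\rho$ determined by $\sigma$, $|S|$, $\deg\beta$, and $\max_{v\in S}|\beta|_v$) required to beat the factor $\kappa^{\sigma r_n|S|}$ coming from the other places. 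The paper's device is to peel off a \emph{fixed} number $\delta$ of leading terms---chosen as the least integer with $m_{n,\delta}>\rho r_n$ for infinitely many $n$, which exists because $\mu_n\gg r_n$---and to enlarge the tuple by the $\delta$ coordinates $w_{n,m_{n,j}}\beta^{-m_{n,j}}$ for $j=1,\ldots,\delta$. Only then does the linear form evaluate to the tail $\sum_{m>s_n}w_{n,m}\beta^{-m}$ with $s_n>\rho r_n$, and the Subspace inequality goes through.

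This enlargement has a cost you have not addressed: the subspace relation produced by Theorem~\ref{thm:SUBSPACE} can now involve the new coordinates, and your clean dichotomy ``$c_{\sigma+1}=0$ versus $c_{\sigma+1}\neq 0$'' no longer suffices. In particular one must show that the relation cannot involve the last peeled-off coordinate $w_{n,m_{n,\delta}}\beta^{-m_{n,\delta}}$; this is where Proposition~\ref{prop:gap} enters, applied to the sparse Laurent polynomial in $\beta$ that encodes the relation, exploiting the gap $m_{n,\delta}-m_{n,\delta-1}\gg r_n$. The paper then subtracts a multiple of the relation from $L$ to obtain $L'$ with $x_{\sigma+1}$ eliminated but $x_{\sigma+\delta+1}$ retained, derives the lower bound $|L'(\boldsymbol a_n)|\gg |w_{n,m_{n,\delta}}\beta^{-m_{n,\delta}}|$, and finishes via the Polynomial-Variation bound on the tail---rather than by forcing $L_n=0$ exactly as you do. Your endgame would work if you could reach $L_n=0$, but with the extra coordinates present the substitution no longer yields $L_n=\sum d_i\beta^{ir_n}$; the terms $a_{n,\sigma+1+j}$ with small $j$ need not tend to zero, so the dominant-term argument breaks down.
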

\begin{proof}
  Suppose that $\alpha$ is algebraic.  We will use the Subspace
  Theorem to obtain a contradiction.  Let $S$ comprise all the  Archimedean
  places of $\mathbb Q(\beta)$ and all non-Archimedean places corresponding to
  prime ideals $\mathfrak{p}$ of $\mathcal O_{\mathbb Q(\beta)}$
  such that $\mathrm{ord}_{\mathfrak{p}}(\beta)\neq 0$.  Let $v_0\in S$ be the place
  corresponding to the inclusion of $\mathbb Q(\beta)$ in
  $\mathbb{C}$.  Recall that
  $|\cdot|_{v_0}=|\cdot|^{1/\mathrm{deg}(\beta)}$, where $|\cdot|$ denotes the
  usual absolute value on $\mathbb{C}$.  Let $\kappa \geq 2$ be an
  upper bound of $|\beta|_v$ for all $v \in S$.

  By the assumption that $\boldsymbol u$ satisfies Condition (*), there
  is an integer sequence $\langle r_n\rangle_{n=0}^\infty$ and
  $b_0,\ldots,b_\sigma \in \mathbb Z$ such that the family of
  sequences $\langle w_{n,m}\rangle_{m=0}^{\infty}$ defined by
  $w_{n,m} := \sum_{i=0}^{\sigma} b_i u_{m+ir_n}$ satisfies the
Expanding-Gaps and Polynomial-Variation conditions from Definition~\ref{def:stuttering}.
  
  Define $\rho:=2\sigma |S| \mathrm{deg}(\beta) \frac{\log \kappa}{\log
    |\beta|}$.  For each $n\in \mathbb N$, let
  $0 \leq m_{n,1}< m_{n,2} < \cdots$ be an increasing enumeration of
  $\{m:w_{n,m}\neq 0\}$.  Since $\mu_n \gg r_n$, we may define
  $\delta\geq 1$ to be least such that $m_{n,\delta} > \rho r_n$ for
  infinitely many
  $n\in \mathbb N$.  We also define $s_n := m_{n,\delta+1}-1$ for all
  $n\in\mathbb N$ and note that $s_n-m_{n,\delta} \gg
  r_n$.
  
    For $n\in \mathbb{N}$, define
    $\boldsymbol{a}_n:=(a_{n,0},\ldots,a_{n,\sigma+\delta+1}) \in
    (\mathcal{O}_S)^{\sigma+\delta+2}$ by
   \begin{equation}
\begin{array}{rcll}
         a_{n,i} & \, := \, & \beta^{ir_n} \quad\text{for $i\in
  \{0,\ldots,\sigma\}$} \qquad\qquad\quad &
         a_{n,\sigma+1} \, := \,  \sum_{i=0}^\sigma \sum_{m=0}^{ir_n-1} 
                     b_i u_m\beta^{ir_n-m} \\[4pt]
      a_{n,\sigma+1+j} & \, := \, & w_{n,m_{n,j}}\, \beta^{-m_{n,j}} \quad\text{for
                       $j\in \{1,\ldots,\delta \}$}\, . &
\end{array}
                            \label{eq:a-n}
                   \end{equation}
and consider the linear form 
    \[ L(x_0,\ldots,x_{\sigma+\delta+1}):= \alpha \sum_{i=0}^\sigma b_i x_i -
      \sum_{i=\sigma+1}^{\sigma+\delta+1} x_i \, . \]
    Then for all $n\in\mathbb N$ we have
    \begin{gather}
      L(\boldsymbol{a}_n)  = \sum_{i=0}^{\sigma} \sum_{m=0}^{\infty}u_mb_i\beta^{ir_n-m} -
              \sum_{i=0}^{\sigma} \sum_{m=0}^{ir_n-1} b_i u_m\beta^{ir_n-m}- 
               \sum_{i=1}^\sigma  w_{n,m_{n,i}}\, \beta^{-m_{n,i}}
=                         \sum_{m=s_n+1}^\infty  w_{n,m}\beta^{-m}
                  \label{eq:lin-form}
\end{gather}
By the assumption that  $|u_m| \ll m^{c_1}$, there exists $c_2 \geq 0$ such that $|w_{n,m}| \ll
(m+r_n)^{c_2}$.
By~\eqref{eq:lin-form},
  \begin{equation}
  \textstyle  |L(\boldsymbol{a}_n)| =  \left | \sum_{m=s_n+1}^\infty w_{n,m}\beta^{-m} \right|
   \ll  \sum_{m=s_n+1}^\infty 
           (m+r_n)^{c_2} |\beta|^{-m} 
   \ll   s_n^{c_2}\,  |\beta|^{-s_n} \, .
\label{eq:STRICT}
\end{equation}

For $v \in S$, recalling that $|\beta|_v \leq \kappa$, there is a
constant $c_3$ such that 
\begin{gather}
  |a_{n,\sigma+1}|_v  \ll  r_n^{c_3} \kappa^{\sigma r_n} \,  .
\label{eq:BOUND1}
\end{gather}
By the product formula we have $\prod_{v\in S}|a_{n,i}|_v=1$ for $i
\in \{0,\ldots,\sigma\}$ and
\begin{gather}
  \prod_{j=1}^{\delta} \prod_{v \in S}  |a_{n,\sigma+1+j}|_v
  = \prod_{j=1}^\delta |w_{n,m_{n,j}}| \ll s_n^{c_2\delta} \,  .
\label{eq:BOUND2}
\end{gather}

The bounds~\eqref{eq:STRICT},~\eqref{eq:BOUND1},
and~\eqref{eq:BOUND2}, imply that there is a constant $c_4$ such that for
all $n$,
\begin{equation}
 \begin{array}{rcl}
   |L(\boldsymbol{a}_n)|_{v_0} \cdot
     \prod_{\substack{(i,v) \in \{0,\ldots,\sigma+\delta+1\}\times
   S\\(i,v) \neq (\sigma+1,v_0)}} |a_{i,n}|_v &\,\leq \, & 
 \kappa^{\sigma r_n|S|} s_n^{c_4} \,
 |\beta|^{-s_n/\mathrm{deg}(\beta)} \\
 & \leq & s_n^{c_4} |\beta|^{-s_n/2\mathrm{deg}(\beta)} \, ,
 \label{eq:INEQ3}
 \end{array}
 \end{equation}
where the second inequality follows from the fact that,
since $s_n\geq \rho r_n$ and 
$\rho=2\sigma |S| \mathrm{deg}(\beta) \frac{\log \kappa}{\log |\beta|}$,
we have $\kappa^{\sigma r_n|S|} = |\beta|^{\rho r_n / 2 \mathrm{deg}(\beta)}
\leq |\beta|^{s_n/2\mathrm{deg}(\beta)}$.
On the other hand, there
exists a constant $c_5>0$ such that the height of~$\boldsymbol a_n$
satisfies $H(\boldsymbol{a}_n) \ll |\beta|^{c_5 s_n}$.
Thus there exists $\varepsilon>0$ such that the right-hand side
of~\eqref{eq:INEQ3} is at most $H(\boldsymbol{a}_n)^{-\varepsilon}$
for $n$ sufficiently large.  We can therefore apply Theorem~\ref{thm:SUBSPACE}
to obtain a non-zero linear form $F(x_0,\ldots,x_{\sigma+\delta+1})$,
with coefficients in $\overline{\mathbb{Q}}$, such that
$F(\boldsymbol{a}_n)=0$ for infinitely many~$n$.

By Claim~\ref{claim:C}, below, the support of $F$ contains
variable $x_{\sigma+1}$ but omits variable
$x_{\sigma+\delta+1}$.
Thus, by subtracting a suitable
multiple of $F$ from $L$ we obtain a linear form $L'$ whose support
includes $x_{\sigma+\delta+1}$ but not $x_{\sigma+1}$ and 
such that $L'(\boldsymbol a_n)=L(\boldsymbol a_n)$ for
infinitely many $n$.

By Claim~\ref{claim:C}(i) we have 
$|L'(\boldsymbol a_n)| \gg |a_{n,\sigma+\delta+1}|= \left|
  w_{n,m_{n,\delta}} \beta^{-m_{n,\delta}} \right|$.
Expanding $L'(\boldsymbol a_n)=L(\boldsymbol a_n)$ as
in~\eqref{eq:lin-form}, for infinitely many $n$ we have
\begin{gather}
\left| w_{n,m_{n,\delta}} \beta^{-m_{n,\delta}} \right| \ll
\left|L'(\boldsymbol a_n)\right|  \, = \,\bigg| \sum_{j=\delta+1}^\infty w_{n,m_{n,j}}
                                            \beta^{-m_{n,j}} \bigg|   \, . 
\label{eq:GAT}
\end{gather}
The Polynomial-Variation Condition gives $|w_{n,m_{n,j}}|\ll
(m_{n,j}-m_{n,\delta})^{c_0}+|w_{n,m_{n,\delta}}|$ for all $j\geq
\delta+1$.  
Applying this upper bound to the right-hand sum in~\eqref{eq:GAT} and
dividing through by $|w_{n,m_{n,\delta}}|$, we obtain
\[  \left| \beta^{-m_{n,\delta}} \right| \ll 
     \left| (m_{n,\delta+1}- m_{n,\delta})^{c_0}  \beta^{-m_{n,\delta+1}}
     \right| \, . \]
But this  contradicts the fact that
   $m_{n,\delta+1}-m_{n,\delta}\rightarrow \infty$ as $n\rightarrow \infty$.
\end{proof}

\begin{claim}
      Let  $F(x_0,\ldots,x_{\sigma+\delta+1}) = \sum_{i=0}^{\sigma+\delta+1} \alpha_i
      x_i$ be  a linear form with coefficients in $\overline{\mathbb
        Q}$.  We have (i)~if $\alpha_{\sigma+1}=0$ then $|F(\boldsymbol
      a_n)| \gg |a_{n,\sigma+\delta+1}|$; (ii)~if
      $F(\boldsymbol{a}_n)=0$ for infinitely many $n$, then
      $\alpha_{\sigma+\sigma+1}=0$.
\label{claim:C}
\end{claim}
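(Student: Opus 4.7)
For (i), my approach is a direct case split on the support of $F$, using the hypothesis $\alpha_{\sigma+1}=0$. If some $\alpha_{i^*}\neq 0$ for $i^*\in\{1,\ldots,\sigma\}$, then the term $\alpha_{i^*}\beta^{i^*r_n}$ dominates $F(\boldsymbol a_n)$: the other ``large'' monomials $\alpha_i\beta^{ir_n}$ with $i<i^*$ are smaller by at least a factor of $|\beta|^{r_n}$, while the ``small'' contributions $\alpha_{\sigma+1+j}w_{n,m_{n,j}}\beta^{-m_{n,j}}$ are at most polynomial in $r_n$ for $j<\delta$ and tend to $0$ for $j=\delta$; thus $|F(\boldsymbol a_n)|\geq\tfrac12|\alpha_{i^*}||\beta|^{i^*r_n}$ for $n$ large, which exponentially dominates $|a_{n,\sigma+\delta+1}|\leq(m_{n,\delta}+r_n)^{c_2}|\beta|^{-m_{n,\delta}}$. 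Otherwise $\alpha_i=0$ for every $i\in\{1,\ldots,\sigma\}$, and $F(\boldsymbol a_n)=\alpha_0+\sum_{j=1}^\delta\alpha_{\sigma+1+j}w_{n,m_{n,j}}\beta^{-m_{n,j}}$. Letting $j^*:=\min\{j:\alpha_{\sigma+1+j}\neq 0\}$ (if no such $j$ exists, the sum reduces to the nonzero constant $\alpha_0$ and we are done), the Expanding-Gaps bound $m_{n,j}-m_{n,j^*}\geq (j-j^*)\mu_n\gg r_n$ for $j>j^*$, combined with $|w_{n,m_{n,j^*}}|\geq 1$, forces the $j^*$-term to dominate the sum over~$j$. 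When $j^*=\delta$ the conclusion is immediate; when $j^*<\delta$ the bounds $m_{n,j^*}\leq m_{n,\delta-1}\leq\rho r_n$ yield $|F(\boldsymbol a_n)|/|a_{n,\sigma+\delta+1}|\gg|\beta|^{m_{n,\delta}-m_{n,j^*}}/(m_{n,\delta}+r_n)^{c_2}$, which tends to infinity.

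For (ii), I would argue by contradiction: suppose $\alpha_{\sigma+\delta+1}\neq 0$ and $F(\boldsymbol a_n)=0$ for infinitely many~$n$, and derive a contradiction from Proposition~\ref{prop:gap}. Interpret the identity $F(\boldsymbol a_n)=0$ as $Q_n(\beta)=0$, where $Q_n(z)$ is the Laurent polynomial obtained by replacing $\beta$ with a formal variable $z$ in each $a_{n,i}$ and substituting into~$F$; multiplying by $z^{m_{n,\delta}}$ yields an ordinary polynomial $P_n(z)$ with $P_n(\beta)=0$. The constant term of $P_n$ is exactly $\alpha_{\sigma+\delta+1}w_{n,m_{n,\delta}}$, which is nonzero by hypothesis and by $m_{n,\delta}\in\Delta_n$, while every other monomial of $P_n$ has degree at least $m_{n,\delta}-m_{n,\delta-1}\geq\mu_n$. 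Splitting $P_n=g_n+h_n$ with $g_n:=\alpha_{\sigma+\delta+1}w_{n,m_{n,\delta}}$ (so $d_0=0$) and $h_n:=P_n-g_n$ (so $d_1\geq\mu_n$), Proposition~\ref{prop:gap} would force $\beta$ to be a root of the nonzero constant $g_n$, the desired contradiction---\emph{provided} the gap hypothesis $\mu_n>\log(kH(P_n))/\log H(\beta)$ holds.

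Verifying this gap hypothesis is the main obstacle: $P_n$ has $k=O(r_n)$ nonzero terms, but the coefficient $\alpha_{\sigma+\delta+1}w_{n,m_{n,\delta}}$ admits only the polynomial bound $O((m_{n,\delta}+r_n)^{c_2})$, so to force $\log(kH(P_n))=o(\mu_n)$ one needs an a priori bound of the form $\log m_{n,\delta}=O(\log r_n)$. I would obtain this via Liouville's inequality. Set $F_{\mathrm{rest}}:=F-\alpha_{\sigma+\delta+1}x_{\sigma+\delta+1}$: if $F_{\mathrm{rest}}(\boldsymbol a_n)=0$ for any $n$ in our infinite set, then $F(\boldsymbol a_n)=0$ forces $\alpha_{\sigma+\delta+1}a_{n,\sigma+\delta+1}=0$ and hence $\alpha_{\sigma+\delta+1}=0$ (using $w_{n,m_{n,\delta}}\neq 0$), contradicting the standing assumption. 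Otherwise $\xi_n:=F_{\mathrm{rest}}(\boldsymbol a_n)\neq 0$; since $\xi_n=\widetilde P_n(\beta)$ for a Laurent polynomial $\widetilde P_n$ with $O(r_n)$ nonzero terms, exponents in $[-\rho r_n,\sigma r_n]$ (by the minimality of $\delta$), and coefficients of height $r_n^{O(1)}$, standard bounds on the height of a polynomial evaluated at an algebraic number give $\log H(\xi_n)=O(r_n\log r_n)$. Liouville's inequality then yields $\log|\xi_n|\geq -O(r_n\log r_n)$, while directly $|\xi_n|=|\alpha_{\sigma+\delta+1}|\,|a_{n,\sigma+\delta+1}|\leq C(m_{n,\delta}+r_n)^{c_2}|\beta|^{-m_{n,\delta}}$. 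Comparing these upper and lower bounds forces $m_{n,\delta}=O(r_n\log r_n)$, whence $\log m_{n,\delta}=O(\log r_n)$, and the gap hypothesis $\mu_n\gg r_n\gg\log(kH(P_n))$ is satisfied for all sufficiently large $n$ in our set---delivering the contradiction.
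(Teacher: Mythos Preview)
Your proof of (i) follows the same domination idea as the paper: show that the quantities $|a_{n,i}|$ for $i\neq\sigma+1$ are asymptotically separated, so that the largest term appearing in $F(\boldsymbol a_n)$ dominates the rest and hence $|F(\boldsymbol a_n)|\gg|a_{n,\sigma+\delta+1}|$.  Your explicit case split (first on whether some $\alpha_{i^*}\neq 0$ with $i^*\in\{1,\ldots,\sigma\}$, then on the least $j^*$ with $\alpha_{\sigma+1+j^*}\neq 0$) unpacks what the paper compresses into a single sentence, but the content is the same.

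For (ii), both you and the paper reduce to Proposition~\ref{prop:gap}, isolating the monomial $\alpha_{\sigma+\delta+1}w_{n,m_{n,\delta}}x^{-m_{n,\delta}}$ across the gap $m_{n,\delta}-m_{n,\delta-1}\geq\mu_n\gg r_n$; and both identify the height of the Laurent polynomial $P_n$ as the only obstacle.  The difference is how that height is controlled.  You take a detour through Liouville's inequality: bounding $\log H\bigl(F_{\mathrm{rest}}(\boldsymbol a_n)\bigr)=O(r_n\log r_n)$ and comparing with the upper bound $|F_{\mathrm{rest}}(\boldsymbol a_n)|\ll (m_{n,\delta}+r_n)^{c_2}|\beta|^{-m_{n,\delta}}$ yields $m_{n,\delta}=O(r_n\log r_n)$, whence $\log H(P_n)=O(\log r_n)$.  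The paper instead uses the Polynomial-Variation hypothesis directly: applying it with $m=m_{n,\delta-1}$ and $m'=m_{n,\delta}$ gives
\[
|w_{n,m_{n,\delta}}|\;\ll\;(m_{n,\delta}-m_{n,\delta-1})^{c_0}+|w_{n,m_{n,\delta-1}}|,
\]
and since $m_{n,\delta-1}\leq\rho r_n\ll m_{n,\delta}-m_{n,\delta-1}$, the right-hand side is already polynomial in the gap itself.  Thus $\log\bigl(kH(P_n)\bigr)$ is logarithmic in the gap, and Proposition~\ref{prop:gap} applies without ever bounding $m_{n,\delta}$.  Your Liouville route is valid, but the paper's one-line appeal to Polynomial-Variation is exactly what that hypothesis is in Definition~\ref{def:stuttering} for, and avoids the extra machinery.
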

\begin{proof}
  Let $i,j\in \{1,\ldots,\delta\}$ be such that  $i<j$.  By the
  Polynomial-Variation Condition, for all $n$, 
\[ \left|\frac{a_{n,\sigma+1+j}}{a_{n,\sigma+1+i}}\right|
=\left| \frac{w_{n,m_{n,j}}}{w_{n,m_{n,i}}}\right| \left|\beta\right|^{-(m_{n,j}-m_{n,i})}
    \leq
  ((m_{n,j}-m_{n,i})^{c_0}+1) |\beta|^{-(m_{n,j}-m_{n,i})} \, .
\]
But as $n\rightarrow \infty$ we have
$m_{n,j}-m_{n,i}\rightarrow \infty$ and hence
$\left|\frac{a_{n,\sigma+1+j}}{a_{n,\sigma+1+i}}\right| \rightarrow 0$.
   Recalling that $a_{n,i}=\beta^{ir_n}$ for $i \in
   \{0,\ldots,\sigma\}$,  we can say more generally that 
  $|a_{n,i}| \ggg |a_{n,j}|$ for 
  $i,j \in \{0,\ldots,\sigma+\delta+1\} \setminus \{\sigma+1\}$
  with $i>j$.  
  Item~(i) immediately follows.
  
We next show Item~(ii).
  For all $n\in \mathbb{N}$, by~\eqref{eq:a-n} we have
  $F(\boldsymbol{a}_n) = P_n(\beta)$ for the polynomial
  $P_n(x) \in \overline{\mathbb{Q}}[x,x^{-1}]$ defined by
  \[ P_n(x) := \sum_{i=0}^\sigma \alpha_i x^{i r_n} + 
\alpha_{\sigma+1}\sum_{i=0}^{\sigma} \sum_{m=0}^{ir_n-1} b_i u_m x^{ir_n-m} 
+\sum_{j=1}^\delta \alpha_{\sigma+1+j} w_{n,m_{n,j}} x^{-m_{n,j}} \, .
\]
Suppose that
$P_n(\beta)=0$ for infinitely many $n$.   We show that
$\alpha_{\sigma+\delta+1}= 0$.  Indeed,
by the Expanding Gaps Condition we have that 
$m_{n,\delta-1}-m_{n,\delta} \gg r_n$.
Now $P_n(x)$ has at most $\sigma r_n+\delta$ monomials and,
since
since $m_{n,\delta-1}\leq \rho r_n$, $P_n(x)$
has height bounded polynomially in $m_{n,\delta-1}-m_{n,\delta}$.
Hence, by Proposition~\ref{prop:gap} we have that 
that for $n$ sufficiently large, if $P_n(\beta)=0$ then 
  $\alpha_{\sigma+\sigma+1}=0$.
\end{proof}

  \section{Hecke-Mahler Series}

  Write $I:=[0,1)$ for the unit interval.  Every $r \in \mathbb{R}$
  can be written uniquely in the form
  $r = \lfloor r \rfloor + \{ r\}$, where
  $\lfloor r \rfloor \in \mathbb Z$ is the \emph{integer part} of $r$
  and $\{ r\} \in I$ is the \emph{fractional part} of $r$.  Write also
  $\|r\|$ for the distance of $r$ to the nearest integer.  Let
  $0<\theta<1$ be an irrational number and define the \emph{rotation
    map} $R = R_\theta :I \rightarrow I$ by $R(r)= \{r+\theta \}$.
    
      Write $[a_0,a_1,a_2,a_3,\ldots]$ for the 
simple continued-fraction expansion of $\theta$.
Given $n\in \mathbb{N}$, we write
$p_n/q_n:=[a_0,a_1,\ldots,a_n]$ for the $n$-th convergent.  It
is well known that for all $n\in\mathbb{N}$ we have
\begin{gather}
\frac{1}{(a_{n+1}+2)q_n} < |q_n\theta - p_n|<\frac{1}{a_{n+1}q_n} \, . 
\label{eq:FRAC}
\end{gather}
We moreover have the \emph{law of best approximation}:
$q\in\mathbb{N}$ occurs as one of the $q_n$ just in case
$\|q\theta\|<\|q'\theta\|$ for all $q'$ with $0<q'<q$.

\begin{theorem}
  Let $\theta,\alpha \in (0,1)$ with $\theta$ irrational.  Given a
  non-constant polynomial $f(x)\in \mathbb{Z}[x]$, the series
  $\boldsymbol u = \langle u_n\rangle_{n=0}^\infty$ given by
  $u_n := f(\lfloor n \theta+\alpha \rfloor) $ satisfies Condition (*).
        \label{thm:main1}
  \end{theorem}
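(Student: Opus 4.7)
Let $d := \deg f \geq 1$ and $s_m := \lfloor m\theta + \alpha\rfloor$, so that $u_m = f(s_m)$.  My plan is to take $\sigma := d+1$ with $b_i := (-1)^{d+1-i}\binom{d+1}{i}$ (the coefficients of the $(d+1)$-th finite-difference operator, satisfying $\sum_{i=0}^{d+1} b_i\, i^k = 0$ for all $k \leq d$), and $\langle r_n\rangle_{n=0}^\infty$ as a subsequence of the convergent denominators $\langle q_n\rangle$ of $\theta$, chosen below according to the structure of its partial quotients.  The starting identity $s_{m+q_n} - s_m = p_n + \epsilon_{n,m}$ has $\epsilon_{n,m} \in \{-1,0,1\}$ nonzero exactly when $\{m\theta+\alpha\}$ lies in a distinguished interval $I_n\subseteq [0,1)$ of length $\delta_n := \|q_n\theta\|$ (on the appropriate side of $[0,1)$ depending on the sign of $q_n\theta - p_n$).

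The crucial observation is that iterating yields $\epsilon_{n,m+jr_n} \neq 0 \Leftrightarrow \{m\theta+\alpha\} \in I_n - j\delta_n \pmod 1$, and these $d+1$ candidate intervals (for $j = 0,\ldots,d$) are \emph{pairwise disjoint}: each has length exactly $\delta_n$, and together they tile a single interval $J_n$ of length $(d+1)\delta_n$.  Hence at most one of the window errors can be nonzero at any given $m$.  If none is, $u_{m+jr_n} = f(s_m + jp_n)$ is a polynomial of degree $d$ in $j$ and $w_{n,m} = 0$ by the finite-difference identity; if exactly one $\epsilon_{n,m+j_0 r_n} = \pm 1$ occurs, a short calculation using the partial-sum identity $\sum_{j=0}^{j_0}(-1)^j\binom{d+1}{j} = (-1)^{j_0}\binom{d}{j_0}$ shows that $w_{n,m}$ is a polynomial in $s_m$ of degree exactly $d-1$ with nonzero leading coefficient (as $f$ is non-constant).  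Consequently $\Delta_n$ consists of such single-error positions (minus finitely many exceptions where this polynomial happens to vanish at integer $s_m$), and $|w_{n,m}| \gg m^{d-1}$ for large $m \in \Delta_n$.  Polynomial Variation then follows with $c_0 = d-1$ via the elementary bound
\[ (m')^{d-1} \;\ll\; m^{d-1} + (m'-m)^{d-1} \;\ll\; |w_{n,m}| + (m'-m)^{d-1}. \]

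For Expanding Gaps, $\Delta_n \subseteq \{m : \{m\theta+\alpha\} \in J_n\}$, and the three-distance theorem identifies the minimum return time of $R_\theta$ to $J_n$ as $q_{n'}$, where $n'$ is the unique index satisfying $\|q_{n'}\theta\| \leq (d+1)\|q_n\theta\| < \|q_{n'-1}\theta\|$.  A case-split on the partial quotients of $\theta$ secures $\mu_n \gg r_n$: if $\theta$ has bounded partial quotients (say $a_n \leq M$ for all $n$), then $n' \geq n - K$ for some $K = K(M,d)$ and $q_{n'}/q_n$ is bounded below by a positive constant, so all sufficiently large $n$ work; otherwise $a_{n+1} \geq d+1$ for infinitely many $n$, and along such a subsequence $n_k$ the inequality $(d+1)\|q_{n_k}\theta\| < \|q_{n_k-1}\theta\|$ forces $n' = n_k$, giving $\mu_{n_k} \geq q_{n_k} = r_{n_k}$.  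The main obstacle is this three-distance gap analysis coupled with the subsequence selection, which must uniformly handle every possible continued-fraction behaviour of $\theta$—including both badly-approximable $\theta$ (where $q_{n'}/q_n$ must be controlled for $n' < n$) and Liouville-like $\theta$ (where the appropriate subsequence of convergents must be isolated).
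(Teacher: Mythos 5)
Your construction coincides with the paper's: the same $\sigma=\deg(f)+1$ and binomial coefficients $b_k$, the same observation that at most one wraparound can occur among the positions $m, m+r_n,\ldots,m+\sigma r_n$ (your ``at most one $\epsilon_{n,m+jr_n}\neq 0$'' is exactly the threshold $\ell$ in the paper's Claim~\ref{claim:long}), the same identification of $w_{n,m}$ at a single-error position with a value of a polynomial of degree $d-1$, and a gap bound via the law of best approximation (your return-time estimate plays the role of Claim~\ref{claim:dio}; your bounded/unbounded dichotomy for selecting $r_n$ is a workable substitute for the paper's record-index choice of $\ell_n$, and that part of your argument is sound). The place where your proposal genuinely breaks is the verification of Polynomial Variation.

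At a single-error position, $w_{n,m}=\pm\sum_{k>j_0} b_k\, g(s_m+kp_n)$ where $g=f(\cdot+1)-f(\cdot)$ has degree $d-1$. As a polynomial in $s_m$ this indeed has nonzero leading coefficient, but its \emph{lower} coefficients involve powers of $p_n\asymp r_n$ and are therefore unbounded as $n$ grows. So ``nonzero leading coefficient'' only yields $|w_{n,m}|\gg m^{d-1}$ once $s_m\geq Mp_n$, i.e.\ $m\geq M'r_n$ for a constant $M'$ determined by $f$; it gives nothing, uniformly in $n$, for $m\lesssim r_n$. Condition~(*) requires a \emph{single} implicit constant valid for all $n$ and all pairs $m<m'$ in $\Delta_n$ (this is how $\ll$ is defined in the paper), and $\Delta_n$ really does contain small elements: for badly approximable $\theta$, say, the orbit first enters the interval $J_n$ of length $\asymp\|r_n\theta\|$ after $O(r_n)$ steps with an absolute constant, well below $M'r_n$. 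For such pairs the second step of your chain $(m')^{d-1}\ll m^{d-1}+(m'-m)^{d-1}\ll |w_{n,m}|+(m'-m)^{d-1}$ is unjustified, and your hedges (``for large $m\in\Delta_n$'', ``finitely many exceptions'') do not repair it: the problem is not vanishing of $w_{n,m}$ but its possible smallness relative to $m^{d-1}$, uniformly in $n$. The missing piece is exactly the paper's second case: if $m\leq M'r_n$, then Expanding Gaps gives $m'-m\gg r_n$, hence $m'\leq M'r_n+(m'-m)\ll m'-m$, so $|w_{n,m'}|\ll (m'+r_n)^{d-1}\ll (m'-m)^{d-1}$ and the required bound holds with no lower bound on $|w_{n,m}|$ at all. (Alternatively one could try to prove that the forms $\sum_{k>j_0}b_k(x+ky)^{l}$ are sign-definite on the closed positive quadrant, which would make your lower bound uniform for all $m$ beyond an absolute threshold; this appears to be true, but it is a nontrivial combinatorial lemma that you neither state nor prove, and the paper deliberately avoids needing it.)
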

  \begin{proof}
    Referring to Definition~\ref{def:stuttering}, we define the
    sequence $\langle r_n\rangle_{n=0}^\infty$ via the
    continued-fraction expansion $[a_0,a_1,a_2,a_3,\ldots]$
    of~$\theta$.  If the expansion is unbounded, then choose
    $\ell_1 < \ell_2< \cdots$, either all odd or all even, such that
    $a_{\ell_n+1} \geq a_m$ for all $n\in\mathbb N$ and all
    $m \leq \ell_n$.  If the expansion is bounded, then choose
    $\ell_1 < \ell_2< \cdots$ to be the even natural numbers.  In
    either case, there exists a constant $\varepsilon>0$ such that for
    all $n\in\mathbb N$
    and $m\leq \ell_n$, we have
    $\frac{a_{{\ell_n}+1}}{a_{m+1}+2} \geq \varepsilon$.  Now define
    $r_n$ to be the denominator $q_{\ell_n}$ of the $\ell_n$-th
    convergent.  Since the $r_n$ all have the same parity, we have
    either that $\|r_n\theta\| = \{ r_n\theta\}$ for all $n$ or
    $\|r_n\theta\| = 1-\{ r_n\theta\}$ for all $n$.  We assume the
    former case; the reasoning in the latter case requires minor
    modifications.

    Let $\sigma:=\mathrm{deg}(f)+1$ and define the sequence
    $\boldsymbol w_n = \langle w_{n,m} \rangle_{m=0}^\infty$ by
    $w_{n,m}:=\sum_{k=0}^\sigma b_k u_{m+kr_n}$ where
    $b_k := (-1)^k \binom{\sigma}{k}$ for $k \in \{0,\ldots,\sigma\}$.
We rely on the following two claims, whose proofs are given below.
\begin{claim}
      For $0 \leq q < r_n$, if 
    $\|q \theta \| < \sigma \| r_n\theta \|$ then 
    $q \geq \frac{\varepsilon}{\sigma } r_n$.
\label{claim:dio}
\end{claim}
\begin{claim}
Let $n\in\mathbb N$ be sufficiently large that $\sigma \{r_n\theta\} <
1$.    Then  there exist $M\geq 0$ and $\varepsilon_1,
\varepsilon_2>0$ such that for all $m\in \mathbb N$,
  \begin{enumerate}
    \item if    $\{ m\theta +\alpha \}+\{\sigma r_n\theta \}<1$, then
    $w_{n,m}=0$;
\item if $m>Mr_n$ and $\{ m\theta +\alpha \} +\{\ell r_n\theta\} < 1< \{ m\theta +\alpha \} +\{(\ell+1) r_n\theta\}$ for some $\ell \in
\{0,\ldots,\sigma-1\}$, then $\varepsilon_1 m^{\sigma-2} \leq |w_{n,m}|
\leq \varepsilon_2 m^{\sigma-2}$.
\end{enumerate}
 \label{claim:long}
\end{claim}
For
$n \in \mathbb{N}$  let $0 \leq m_{n,1}< m_{n,2} <$ be an increasing
enumeration of $\Delta_n:=\{ m : w_{n,m}\neq 0\}$.
Note that by
Item~2 of Claim~\ref{claim:long} and
equidistribution of the sequence $\{m\theta+\alpha\}$, for all $n$
the set $\Delta_n$ is infinite.  Moreover, by
Item~1 of Claim~\ref{claim:long}, for
$j \in \{1,\ldots,\delta\}$ we have
$\|m_{n,j}\theta + \alpha \| < \sigma \|r_n\theta\|$ and
$\|m_{n,j+1}\theta + \alpha \| < \sigma \|r_n\theta\|$ and hence  $\|
(m_{n,j+1}-m_{n,j})\theta \| < \sigma \|r_n\theta\|$.  By
Claim~\ref{claim:dio} it follows that
$m_{n,j+1} - m_{n,j} \geq \varepsilon r_n\sigma^{-1}$, which establishes
the Expanding Gaps Property in Definition~\ref{def:stuttering}.

It remains to show the Polynomial-Variation Property: there exists $c_0\geq 0$ such
that for all $m'>m$ in $\Delta_n$ we have $|w_{n,m'}| \ll (m'-m)^{c_0}+|w_{n,m}|$.
We use Item~2 of Claim~\ref{claim:long}.  There are two cases.  First
suppose that $m>Mr_n$ for $M$ as in the claim.  Then
\[ |w_{n,m'}| \ll (m')^{\sigma-2} = (m'-m+m)^{\sigma -2} \ll
(m'-m)^{\sigma-2}+m^{\sigma-2} \ll (m'-m)^{\sigma-2}+|w_{n,m}| \, .\]
The second case is that $m \leq Mr_n$.  Since $m'-m \geq
\varepsilon\sigma^{-1}r_n$ we have $m'-m\geq
\frac{\varepsilon}{\varepsilon+\delta M}m'$.  From the fact that
$|w_{n,m'}| \leq (m+r_n)^{c_0}$ for some $c_0\geq 0$, it follows that 
$|w_{n,m'}| \ll (m'-m)^{c_0}$, which concludes the proof.
\end{proof}

  It remains to prove the two claims in the body of the proof of Theorem~\ref{thm:main1}.

\begin{proof}[Proof of Claim~\ref{claim:dio}]
  Assume that $\|q\theta \|<\sigma \|r_n\theta \|$.  Choose the
  largest $m$ such that $q_m\leq q$.
    By the law of best approximation we have
  $\|q_m\theta \|\leq \|q\theta\| < \sigma \|r_n
  \theta\|=\sigma\|q_{\ell_n}\theta \|$.  Then~\eqref{eq:FRAC} gives
\[ \frac{1}{(a_{m+1}+2)q_m} \leq \| q_m\theta \| \leq \sigma
  \|q_{\ell_n}\theta\| \leq \frac{\sigma}{a_{\ell_n+1}q_{\ell_n}} \]
  and we conclude  that
  $q_m \geq \frac{a_{{\ell_n}+1}q_{\ell_n}}{\sigma(a_{m+1}+2)}$.
  We also have $m<\ell_n$, since 
  $q_m \leq q<  r_n=q_{\ell_n}$; thus, by the defining property of $\ell_n$,  we have
  $\frac{a_{{\ell_n}+1}}{\sigma(a_{m+1}+2)} \geq
  \frac{\varepsilon}{\sigma}$. Combining the two previous bounds
  gives \[ q\geq
  q_m\geq \frac{a_{{\ell_n}+1}q_{\ell_n}}{\sigma(a_{m+1}+2)}
  \geq \frac{\varepsilon}{\sigma} q_{\ell_n}=
  \frac{\varepsilon}{\sigma} r_n \, .\]
This concludes the proof.
\end{proof}
  
\begin{proof}[Proof of Claim~\ref{claim:long}]
Given  $y \in \mathbb Z$, define the difference operator
    $\Delta_y : \mathbb Z[x]\rightarrow \mathbb Z[x]$ by
    $\Delta_y(g)(x) = g(x)-g(x+y)$.  Since $\Delta_y(g)$ has degree
    strictly less than that of $g$, we have that
    $(\Delta_y)^\sigma(f)(x) =\sum_{k=0}^{\sigma} b_k f(x+y k)$
is the zero polynomial in $\mathbb Z[x]$.  We now prove the two items
of the claim.

  1. If $\{ m\theta +\alpha \}+\{\sigma r_n\theta \}<1$, then
    \begin{eqnarray*}
      w_{n,m} &=&
\sum_{k=0}^{\sigma} b_k f( \lfloor (m+kr_n)\theta+\alpha \rfloor) \\
&=& \sum_{k=0}^{\sigma} b_k f( 
 \lfloor m\theta+\alpha \rfloor + \lfloor kr_n\theta \rfloor) \\
                                                  &=&
                                                      \sum_{k=0}^\sigma 
                                                      b_kf(\lfloor 
                                                      m\theta+\alpha 
                                                      \rfloor +k \lfloor 
                                                      r_n\theta\rfloor ) \\[2pt]
                                                     & = & 0 \, . 
    \end{eqnarray*}

    2.  If $\{ m\theta +\alpha \} +\{\ell r_n\theta\} < 1< \{ m\theta +\alpha \} +\{(\ell+1) r_n\theta\}$ for some $\ell \in
\{0,\ldots,\sigma-1\}$, then
 \begin{eqnarray*}
   w_{n,m} & =&  \sum_{k=0}^{\sigma} b_k f(\lfloor (m+kr_n)\theta 
                +\alpha \rfloor) \\
   &=& \sum_{k=0}^{\ell} b_k f(\lfloor m\theta 
       +\alpha \rfloor+k\lfloor r_n\theta\rfloor) 
+ \sum_{k=\ell+1}^\sigma b_k f(\lfloor m\theta 
       +\alpha \rfloor+1+k\lfloor r_n\theta\rfloor) \\
   &=& \sum_{k=0}^{\ell} b_k (f(\lfloor m\theta 
       +\alpha \rfloor+k\lfloor r_n\theta\rfloor) 
       - f(\lfloor m\theta 
       +\alpha \rfloor+1+k\lfloor r_n\theta\rfloor) \, .
 \end{eqnarray*}

 Define $p \in \mathbb Z[x,y]$ by $p(x,y):=\sum_{k=0}^{\ell}
 b_k(f(x+ky)-f(x+1+ky))$.  The equation above can be written
$w_{n,m}= p(\lfloor m\theta+\alpha \rfloor,\lfloor r_n\theta 
\rfloor)$.
 Since $f$ is not constant we have
 $\sigma\geq 2$.  By direct calculation,
 the coefficient of $x^{\sigma-2}$ in $p$ is 
the product of the leading coefficient of $f$ and 
\[ (1-\sigma)\sum_{k=0}^\ell b_k \, =\, (1-\sigma) \sum_{k=0}^{\ell}
  (-1)^k \binom{\sigma}{k} \,=\,  (-1)^{\ell} 
  (1-\sigma)\binom{\sigma-1}{\ell}\neq 0 \,  .  \] 
Thus for $M$  suitably large, there exists
$\varepsilon_1,\varepsilon_2>0$ such that
$x\geq M y$ implies $\varepsilon_0 x^{\sigma-2} \leq |p(x,y)|\leq
\varepsilon_2 x^{\sigma-2}$.  The claim follows.
\end{proof}

Combining Theorems~\ref{thm:main} and~\ref{thm:main1} we obtain our 
main result: 
\begin{theorem}
 Let $\theta,\alpha \in (0,1)$ with $\theta$ irrational and $\beta$ an
 algebraic number with $|\beta|>1$.  Given a 
 non-constant polynomial $f(x)\in \mathbb{Z}[x]$, the series 
 $\sum_{n=0}^\infty f(\lfloor n\theta+\alpha \rfloor)\beta^{-n}$ is transcendental. 
  \end{theorem}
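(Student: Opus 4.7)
The plan is to simply combine the two main theorems already established in the excerpt, namely Theorem~\ref{thm:main} and Theorem~\ref{thm:main1}. Set $u_n := f(\lfloor n\theta+\alpha \rfloor)$, so that the series of interest is $\sum_{n=0}^\infty u_n \beta^{-n}$. Theorem~\ref{thm:main1} tells us that $\boldsymbol u = \langle u_n\rangle_{n=0}^\infty$ satisfies Condition~(*), and Theorem~\ref{thm:main} tells us that any integer sequence satisfying Condition~(*) and having polynomial growth produces a transcendental sum against $\beta^{-n}$ when $\beta$ is algebraic with $|\beta|>1$.

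Hence the only thing I need to verify to chain these results together is the polynomial growth hypothesis $|u_n| \ll n^{c_1}$. This is immediate: since $0<\theta<1$ and $0\leq \alpha<1$, we have $|\lfloor n\theta + \alpha\rfloor| \leq n+1$, and therefore $|u_n| = |f(\lfloor n\theta+\alpha\rfloor)| \ll (n+1)^{\deg f}$. So we may take $c_1 := \deg f$.

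With these two observations in hand, Theorem~\ref{thm:main} applies to $\boldsymbol u$ and yields the transcendence of $\sum_{n=0}^\infty f(\lfloor n\theta+\alpha \rfloor)\beta^{-n}$, which is precisely the claim. In this sense there is no real ``hard part'' remaining here: all of the technical work has been carried out in Theorems~\ref{thm:main} and~\ref{thm:main1}, and the present theorem is simply their conjunction together with the trivial polynomial bound on $u_n$.
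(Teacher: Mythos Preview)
Your proposal is correct and matches the paper's approach exactly: the paper simply states that the result follows by combining Theorems~\ref{thm:main} and~\ref{thm:main1}. In fact you are slightly more explicit than the paper in that you verify the polynomial-growth hypothesis $|u_n|\ll n^{c_1}$ needed to invoke Theorem~\ref{thm:main}, which the paper leaves implicit.
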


  
\bibliography{bibliography}

\begin{thebibliography}{10}

\bibitem{AB1}
B.~Adamczewski and Y.~Bugeaud.
\newblock On the complexity of algebraic numbers {I}. expansions in integer
  bases.
\newblock {\em Annals of Mathematics}, 165:547--565, 2005.

\bibitem{BKLN21}
Y.~Bugeaud, D.~H. Kim, M.~Laurent, and A.~Nogueira.
\newblock On the {D}iophantine nature of the elements of {C}antor sets arising
  in the dynamics of contracted rotations.
\newblock {\em Annali Scuola Normale Superiore di Pisa - Classe Di Scienze},
  XXII:1681--1704, 2021.

\bibitem{FM}
S.~Ferenczi and C.~Mauduit.
\newblock Transcendence of numbers with a low complexity expansion.
\newblock {\em Journal of Number Theory}, 67(2):146--161, 1997.

\bibitem{KebisLOS024}
P.~Kebis, F.~Luca, J.~Ouaknine, A.~Scoones, and J.~Worrell.
\newblock On transcendence of numbers related to {S}turmian and
  {A}rnoux-{R}auzy words.
\newblock In {\em 51st International Colloquium on Automata, Languages, and
  Programming, {ICALP}}, volume 297 of {\em LIPIcs}, pages 144:1--144:15.
  Schloss Dagstuhl - Leibniz-Zentrum f{\"{u}}r Informatik, 2024.

\bibitem{BL}
M.~Laurent and Y.~Bugeaud.
\newblock Transcendence and continued fraction expansion of values of
  {H}ecke-{M}ahler series.
\newblock {\em Acta Arithmetica}, 209:59--90, 2023.

\bibitem{LEN97}
H.~W. Lenstra~Jr.
\newblock Finding small degree factors of lacunary polynomials.
\newblock {\em Number theory in progress}, 1:267--276, 1999.

\bibitem{LP}
J.~H. Loxton and A.~J. Van~der Poorten.
\newblock Arithmetic properties of certain functions in several variables
  {III}.
\newblock {\em Bulletin of the Australian Mathematical Society}, 16(1):15--47,
  1977.

\bibitem{LOW}
F.~Luca, J.~Ouaknine, and J.~Worrell.
\newblock On the transcendence of a series related to {S}turmian words, 2022.
\newblock To appear, Annali della Scuola Normale Superiore di Pisa.
\newblock \href {https://arxiv.org/abs/2204.08268} {\path{arXiv:2204.08268}}.

\bibitem{Masser1999}
D.~W. Masser.
\newblock Algebraic independence properties of the {H}ecke-{M}ahler series.
\newblock {\em Quarterly Journal of Mathematics}, 50:207--230, 1999.

\bibitem{Schlickewei76}
H.~P. Schlickewei.
\newblock Die p-adische verallgemeinerung des {S}atzes von
  {T}hue-{S}iegel-{R}oth-{S}chmidt.
\newblock {\em Journal für die reine und angewandte Mathematik},
  1976(288):86--105, 1976.

\end{thebibliography}
\end{document}